\newcommand{\D}{\ensuremath{\mathrm{d}}}
\theoremstyle{plain}
\newtheorem*{Thm}{Theorem}
\newtheorem{corollary}{Corollary}[]
\theoremstyle{definition}
\newtheorem*{remark}{Remark}
\newtheorem{example}{Example}[]
\newcommand{\R}{\ensuremath{\mathbb{R}}}
\newcommand{\N}{\ensuremath{\mathbb{N}}}
\begin{document}

\author{S.~Born$^1$}
\address{$^1$Department of Mathematics, Technische Universit{\"a}t Berlin,
Str.~d.~17.~Juni 136, 10623 Berlin, Germany}
\email{born@math.tu-berlin.de}

\author{A.~Dirmeier$^2$}
\address{$^2$MINT-Kolleg, Universit\"at Stuttgart, Azenbergstr.~12, 70174 Stuttgart, Germany}
\email{dirmeier@mint.uni-stuttgart.de}

\thanks{\newline The authors are indepted to Prof.~John Sullivan for pointing out some simplifications in an earlier version of the proof of the Theorem.}

\keywords{Product spaces, Continuous Functions, Paracompactness}

\subjclass[2010]{Primary 54C30; Secondary 26D99}

\title[]{Boundedness of Functions on Product Spaces by Sums of Functions on the Factors}

\begin{abstract}
We investigate sufficient conditions for real-valued functions on product spaces to be bounded from above by sums or products of functions which depend only on points in the respective factors. 
\end{abstract}

\maketitle

We suppose that the result in the Theorem below is known to hold for sufficiently regular spaces, but we were not able to find it anywhere in the standard literature.  

Also note that the statement of the Theorem can essentially be seen as a statement about the poset of real valued continuous functions
on a space $M\times N$. The theorem states, that any finite 
subset of $C^0(M\times N,\R)$ has an upper bound in
the sub-poset of functions $(x,y)\mapsto F(x)+G(y)$.

\begin{Thm}
Suppose $M$ and $N$ are two locally compact Hausdorff spaces that are 
countable at infinity.
Then for any continuous function $f\colon M\times N\to \R$ there 
are continuous functions $F\colon M\to \R$ and $G\colon N\to \R$ such that 
\begin{align*}
f(t,x)\leq  F(t)+G(x) \; .
\end{align*}
\end{Thm}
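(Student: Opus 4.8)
\emph{Plan of proof.} The idea is to collapse the two-factor problem to a one-variable one: first replace $M$ and $N$ by $[0,\infty)$ via proper continuous maps, and then exploit monotonicity.

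\textbf{Step 1 (proper exhausting functions).} I would first produce a continuous proper function $u\colon M\to[0,\infty)$, and likewise $v\colon N\to[0,\infty)$. Since $M$ is locally compact, Hausdorff and countable at infinity it carries an exhaustion $K_1\subseteq\operatorname{int}K_2\subseteq K_2\subseteq\operatorname{int}K_3\subseteq\cdots$ by compact sets with $\bigcup_n K_n=M$; such a space is moreover normal, so Urysohn's lemma yields continuous $f_n\colon M\to[0,1]$ with $f_n\equiv0$ on $K_n$ and $f_n\equiv1$ off $\operatorname{int}K_{n+1}$. Then $u:=\sum_{n\ge1}f_n$ is a locally finite sum (on $K_m$ all terms with $n\ge m$ vanish), hence continuous, and $u\ge n$ off $\operatorname{int}K_{n+1}$, so each $u^{-1}([0,s])$ is a closed subset of some $K_{n+1}$, hence compact.

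\textbf{Step 2 (push $f$ to $[0,\infty)^2$, reduce to one variable).} Define
\[
  G_0(s)=\sup\bigl(\{0\}\cup\{\,f(t,x):u(t)\le s,\ v(x)\le s\,\}\bigr),\qquad s\ge0 .
\]
Since $u^{-1}([0,s])\times v^{-1}([0,s])$ is compact and $f$ continuous, $G_0(s)$ is finite, and $G_0$ is non-decreasing. If $(t,x)\in M\times N$ and $s=\max(u(t),v(x))$, then $(t,x)$ lies in the set defining $G_0(s)$, so $f(t,x)\le G_0(\max(u(t),v(x)))$. Now dominate the monotone $G_0$ by a \emph{continuous} non-decreasing $H\colon[0,\infty)\to\R$ — e.g.\ the piecewise-linear function with $H(n)=G_0(n+1)$ at the integers, for which $H\ge G_0$ by monotonicity of $G_0$ — and note the elementary inequality $H(\max(a,b))\le H(a)+H(b)-H(0)$ for $a,b\ge0$ (if $a\ge b$ this just says $H(b)\ge H(0)$).

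\textbf{Step 3 (conclusion).} Combining these, for every $(t,x)\in M\times N$,
\[
  f(t,x)\le H\bigl(\max(u(t),v(x))\bigr)\le H(u(t))+H(v(x))-H(0),
\]
so $F:=H\circ u$ and $G:=H\circ v-H(0)$ are continuous and satisfy $f(t,x)\le F(t)+G(x)$. The only step needing care is Step 1, the construction of the proper maps $u,v$; this is exactly where local compactness and countability at infinity are used (and without which one should expect the statement to fail). Everything afterwards is soft: the reduction to a monotone function of a single real variable, the passage to a continuous majorant $H$, and the trivial inequality for $H\circ\max$. A full write-up would also record the standard facts that such spaces are normal and admit the compact exhaustion used above, and that the defining sum for $u$ is locally finite.
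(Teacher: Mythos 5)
Your argument is correct, and it takes a genuinely different route from the paper. The paper works with partitions of unity $\{\phi_i\}$, $\{\chi_i\}$ subordinate to the covers by the interiors of the compact exhaustions, sets $a_i=\max_{K_i\times L_i}f$, and defines $F=\sum_i a_i\phi_i$, $G=\sum_i a_i\chi_i$ directly, the key point being that at a point with smallest indices $m,n$ one gets $F(t)+G(x)\geq a_m+a_n\geq a_{\max\{m,n\}}\geq f(t,x)$. You instead build continuous proper maps $u,v$ into $[0,\infty)$ (your Urysohn-sum construction is fine: the sum is locally finite because every point lies in some $\operatorname{int}K_{m+1}$, and $u\geq n$ off $\operatorname{int}K_{n+1}$ gives properness, with normality available since such spaces are Lindel\"of and locally compact Hausdorff, hence paracompact), reduce to the monotone one-variable function $G_0(s)=\sup\{f\ \text{on}\ \{u\leq s\}\times\{v\leq s\}\}$, dominate it by a continuous non-decreasing $H$, and use $H(\max(a,b))\leq H(a)+H(b)-H(0)$; all of these steps check out, including finiteness of $G_0$ by compactness of $\{u\leq s\}\times\{v\leq s\}$ and $H\geq G_0$ for your piecewise-linear interpolant. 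What your reduction buys is conceptual clarity and flexibility: $F$ and $G$ are monotone functions of a single exhaustion parameter, the argument needs only Urysohn's lemma rather than partitions of unity subordinate to refinements, and choosing $u,v,H$ smooth would immediately give the manifold version (the paper's first Corollary) as well as the multiplicative version by exponentiation. What the paper's approach buys is brevity once paracompactness is invoked: the weighted partition-of-unity sums produce $F$ and $G$ in one stroke without the auxiliary majorization step on $[0,\infty)$. Both proofs rest on the same two pillars, compact exhaustion and enough separation to build the auxiliary continuous functions.
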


\begin{proof}
We can assume that $f(t ,x )\geq 0$ for all $t \in M$ and $x \in N$. If this is not the case, we just replace $f(t ,x )$ by $\max\{f(t ,x ),0\}\geq f(t ,x )$. 

The statement is obvious if $M$ or $N$ is compact: If $M$ is compact, we have $f(t ,x )\leq\max_{t \in M}f(t ,x )=:G(x )$. The same argument applies if $N$ is compact. Therefore, we assume that neither $M$ nor $N$ is compact.

By assumption (locally compact and countable at infinity), there are exhaustions 
\[
 M=\bigcup_{i\in\N_0}K_i,\quad N=\bigcup_{i\in\N_0}L_i,
\]
with 
\[
 \emptyset=K_0,\ K_i\subset\mathring{K}_{i+1},
\]
such that the $K_i$'s are compact subsets of $M$ for all $i\in \N_0$ and 
\[
 \emptyset=L_0,\ L_i\subset\mathring{L}_{i+1},
\]
such that the $L_i$'s are compact subsets of $N$ for all $i\in \N_0$. 

Furthermore, we set $U_i=\mathring{K}_i$ with $U_i\subset M$ an open set for all $i\in\N$, as well as $V_i=\mathring{L}_i$ with $V_i\subset N$ an open set for all $i\in\N$. Hence, $\{U_i\}_{i\in\N}$ is an open cover of $M$ and $\{V_i\}_{i\in\N}$ is an open cover of $N$.
The spaces $M$ and $N$ are paracompact, as they are locally compact, Hausdorff and countable at infinity, hence there are partitions of unity 
$\{\phi_i\colon M\to\R\}_{i\in\N}$ resp. $\{\chi_i\colon N\to\R\}_{i\in\N}$ that are subordinate
to locally finite refinements of $\{{U}_i\}_{i\in\N}$ resp.~$\{{V}_i\}_{i\in\N}$.

Now we define
\[
 a_i:=\max_{(t ,x )\in K_i\times L_i}f(t ,x )
\]
 and two functions $F\colon M\to\R$ and 
$G\colon N\to\R$:
\[
 F(t ):=\sum_{i\in\N}\phi_i(t )a_i
\]
and
\[
 G(x ):=\sum_{i\in\N}\chi_i(x )a_i.
\]
Note that in each series we sum only finitely many non-zero terms for every $t \in M$ resp.~$x \in M$, and $F$ and $G$ are continuous functions. 

Now let $(t ,x )$ be an arbitrary point in $M\times N$. Then define $m$ to be the smallest natural number such that 
$t \in U_m$ and $n$ to be the smallest natural number such that  
$x \in V_n$. Hence, $t \not\in U_i$ for $i<m$ and $x \not\in V_i$ for $i<n$.

 This yields
\begin{eqnarray*}
 F(t ) + G(x ) &=&\sum_{i\in\N}\phi_i(t ) a_i + \sum_{i\in\N}  \chi_i(x )a_i\\
                     &=& \sum_{i\geq m}\phi_i(t ) a_i+ \sum_{i\geq n}\chi_i(t )a_i\\
                     &\geq& a_m+a_n \geq f(t ,x ) \; , 
\end{eqnarray*}
as $(t ,x )\in K_k\times L_k$, where $k=\max\{ m,n\}$.


\end{proof}

\begin{corollary}
Let $M$ and $N$ be two $C^\alpha$-manifolds ($\alpha\in\N\cup\{\infty\}$) and $f\colon M\times N\to[0,\infty)$ a $C^0$-function. 
Then there are two $C^\alpha$-functions $F\colon M\to\R$ and $G\colon N\to\R$ such that
\[
 f(t,x)\leq F(t) + G(x)
\]
for all $(t,x)\in M\times N$. 
\end{corollary}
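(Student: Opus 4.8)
The plan is to revisit the proof of the Theorem and observe that it already produces $C^\alpha$-functions, provided the ingredients are chosen in the $C^\alpha$-category. First I would check that the hypotheses of the Theorem are met: with the standard convention that a $C^\alpha$-manifold is Hausdorff and second countable, both $M$ and $N$ are locally compact (being finite-dimensional) and $\sigma$-compact, i.e.\ countable at infinity, so the Theorem applies, and the reductions at the start of its proof (to the non-compact case and to $f\geq 0$) go through verbatim.

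The key point is that on a $C^\alpha$-manifold one can choose the partitions of unity subordinate to a given open cover to consist of $C^\alpha$-functions; this is the standard existence theorem for $C^\alpha$ partitions of unity (for $\alpha=\infty$ it is classical, and for finite $\alpha$ one uses the same smooth bump functions read off in local coordinates, which are in particular $C^\alpha$). Running the construction in the proof of the Theorem with such $\{\phi_i\}$ and $\{\chi_i\}$, the numbers $a_i=\max_{K_i\times L_i}f$ are unchanged, and the resulting functions
\[
 F(t)=\sum_{i\in\N}\phi_i(t)\,a_i,\qquad G(x)=\sum_{i\in\N}\chi_i(x)\,a_i
\]
are, in a neighborhood of every point, finite sums of $C^\alpha$-functions, hence $C^\alpha$. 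The inequality $f(t,x)\leq F(t)+G(x)$ is then exactly the one established in the proof of the Theorem.

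An equivalent route, if one prefers to use the Theorem as a black box, is to apply it to obtain continuous $F_0\colon M\to\R$ and $G_0\colon N\to\R$ with $f\leq F_0+G_0$, and then to majorize each of $F_0,G_0$ from above by a $C^\alpha$-function. For $F_0$: take a locally finite cover $\{W_j\}$ of $M$ by coordinate domains with $\overline{W_j}$ compact, set $c_j=\max_{\overline{W_j}}F_0$, pick a $C^\alpha$ partition of unity $\{\rho_j\}$ subordinate to $\{W_j\}$, and put $F=\sum_j\rho_j c_j$; then $F$ is $C^\alpha$ and $F\geq F_0$ pointwise, and likewise one builds $G\geq G_0$. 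Either way, the only input beyond the Theorem is the availability of $C^\alpha$ partitions of unity on $C^\alpha$-manifolds, which I regard as standard; I do not expect a genuine obstacle here, the main point to be careful about being simply that the notion of manifold in use includes second countability (so that the Theorem's hypotheses hold) and that the chosen partitions of unity are of the correct differentiability class.
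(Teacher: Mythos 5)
Your main argument is exactly the paper's: rerun the proof of the Theorem with the partitions of unity chosen in the differentiable category, so that $F=\sum_i\phi_i a_i$ and $G=\sum_i\chi_i a_i$ become $C^\alpha$ (locally finite sums of $C^\alpha$-functions), the inequality being unchanged. This is correct; your alternative "black box" route is a fine extra, but the core of your proposal coincides with the paper's proof.
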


\begin{proof}
This follows from a simple modification of the proof of Theorem 1 above. The partition of unity
can be chosen $C^\infty$, so that $F$ and $G$ will be $C^\infty$-functions. The case $\alpha=\infty$ then easily follows.
\end{proof}

\begin{corollary}
Let $M$ and $N$ be two spaces as in the Theorem and $f\colon M\times N\to(0,\infty)$ a continuous function. Then there are four functions $F\colon M\to(0,\infty)$, $G\colon N\to(0,\infty)$ and $\varphi\colon M\to(0,\infty)$, $\psi\colon N\to(0,\infty)$, such that 
\[
 \varphi(t)\psi(x)\leq f(t,x)\leq F(t)G(x)
\]
for all $(t,x)\in M\times N$. 
\end{corollary}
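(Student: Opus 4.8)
The plan is to reduce everything to the Theorem by passing to logarithms. Since $f$ maps into the open half-line $(0,\infty)$ and is continuous, the composition $\log f\colon M\times N\to\R$ is a well-defined continuous real-valued function. First I would apply the Theorem to $\log f$, obtaining continuous functions $F_0\colon M\to\R$ and $G_0\colon N\to\R$ with $\log f(t,x)\le F_0(t)+G_0(x)$ for all $(t,x)\in M\times N$. Exponentiating and using that $\exp$ is increasing gives $f(t,x)\le e^{F_0(t)}\,e^{G_0(x)}$, so $F:=\exp\circ F_0$ and $G:=\exp\circ G_0$ are continuous, take values in $(0,\infty)$, and furnish the desired upper bound $f(t,x)\le F(t)G(x)$.

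For the lower bound I would apply the Theorem a second time, now to the continuous function $-\log f\colon M\times N\to\R$. This yields continuous $\widetilde F\colon M\to\R$ and $\widetilde G\colon N\to\R$ with $-\log f(t,x)\le\widetilde F(t)+\widetilde G(x)$, equivalently $\log f(t,x)\ge-\widetilde F(t)-\widetilde G(x)$. Exponentiating once more gives $f(t,x)\ge e^{-\widetilde F(t)}\,e^{-\widetilde G(x)}$, so setting $\varphi:=\exp\circ(-\widetilde F)$ and $\psi:=\exp\circ(-\widetilde G)$ produces continuous functions with values in $(0,\infty)$ satisfying $\varphi(t)\psi(x)\le f(t,x)$. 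Combining the two estimates gives $\varphi(t)\psi(x)\le f(t,x)\le F(t)G(x)$ for all $(t,x)\in M\times N$, as claimed.

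There is essentially no real obstacle here: the corollary is a direct double application of the Theorem, once to $\log f$ and once to $-\log f$, combined with the fact that $\exp\colon\R\to(0,\infty)$ is a continuous, monotone bijection. The only point that needs to be noted is that $\log f$ is genuinely real-valued and continuous, which is exactly why the hypothesis here is $f\colon M\times N\to(0,\infty)$ rather than merely $f\ge 0$; with only nonnegativity allowed, the lower-bound half of the argument would fail.
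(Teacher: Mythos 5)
Your proof is correct, and it is close in spirit to the paper's, but the reduction is not literally the same. The paper does not take logarithms: for the upper bound it applies the Theorem to $f$ itself, obtaining $f(t,x)\le \tilde F(t)+\tilde G(x)$, and then uses the elementary inequality $s\le e^{s}$ to pass to $f(t,x)\le e^{\tilde F(t)}e^{\tilde G(x)}$; for the lower bound it applies this multiplicative upper bound to the reciprocal $\tilde f=1/f$ (which is where positivity of $f$ enters) and inverts, setting $\varphi=1/\tilde F$, $\psi=1/\tilde G$. You instead apply the Theorem to $\log f$ and to $-\log f$ and exponentiate both estimates. The two routes are essentially equivalent in effort and both are complete; the small trade-offs are that your version is more symmetric and isolates exactly where the hypothesis $f>0$ is needed (to define $\log f$, and in particular for the lower bound), while the paper's upper-bound step works verbatim for any continuous $f$, not necessarily positive, since it never forms $\log f$. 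Either argument is acceptable as written.
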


\begin{proof}
Using the Theorem to infer two functions $\tilde{F}\colon M\to\mathbb{R}$ and $\tilde{G}\colon N\to\mathbb{R}$ on the factors and taking exponentials, we get
\[
 f(t ,x )\leq \tilde{F}(t )+\tilde{G}(x )\leq e^{\tilde{F}(t )+\tilde{G}(x )}=e^{\tilde{F}(t )}e^{\tilde{G}(x )}=:F(t )G(x )
\] 
for all $(t,x)\in M\times N$. 

As $f>0$, we can analyze $\tilde{f}(t ,x )=[f(t ,x )]^{-1}$ and we find two functions $\tilde{F}$ and $\tilde{G}$ such that $\tilde{f}(t ,x )\leq \tilde{F}(t)\tilde{G}(x )$. Setting $\varphi(t )=[\tilde{F}(t )]^{-1}$ and $\psi(x )=[\tilde{G}(x )]^{-1}$ yields
\[
 \varphi(t )\psi(x )\leq f(t ,x )
\]
for all $(t,x)\in M\times N$.
\end{proof}

The proof of the Theorem given above most crucially relies on two ingredients: the existence of compact exhaustions for the spaces, hence countability at infinty, and the existence of a partition of unity subordinate to an arbitrary open cover, hence paracompactness. Precise sufficient and necessary conditions for $M$ and $N$ for the claim of the Theorem to be valid are not known to the authors. 

For example, the Theorem remains valid, if $M$ is compact, and $N$ is an arbitrary space. In this case $f(t,x)\leq G(x)$, where $G(x)=\max_{t\in M} f(t,x)$. Furthermore, the following example shows that the claim of the Theorem does not hold if one of the topologies on the factor spaces is paracompact, but not locally compact, and hence not countable at infinity.

\begin{example}
A simple counter-example is given by $M=C(\R,\R^+_0)$ 
with the compact-open topology, $N=\R$ and the evaluation map
\[ f:M\times N\to \R, \ (\phi,y)\mapsto \phi(y) \; .\]
Certainly, the evaluation map is continuous, but $M$ with the compact-open topology is only paracompact, not locally compact.

\textbf{Proof: }
Suppose there are $F:M\to\R$ and $G:N\to\R$ such that 
\[f(\phi,y)\leq F(\phi)+ G(y) \; .\]
Now consider $\phi_0:\R\to\R$, $x\mapsto e^{G(x)}$, hence
\[f(\phi_0,y)=e^{G(y)}\leq F(\phi_0)+G(y)=c_0+G(y)\]
with $c_0:=F(\phi_0)$. Thus $G$ must be bounded from above.
However, by chosing an unbounded function $\phi_1$, we see
\[ \phi_1(y)\leq c_1+G(y) \]
with $c_1:=F(\phi_1)$, which implies that $G$ must be
unbounded: a contradiction. \hfill $\Box$
\end{example}

Apart from the question which necessary and sufficient conditions on the
spaces $M$ and $N$ guarantee this behaviour, we can modify the function 
spaces. For example, no such theorem holds for $L^1$-functions as the following example shows.

\begin{example}
Let $\rho\in L^1(\R)\cap C^0(\R)$, $\rho>0$ and $\int_\R \rho(x)\D x=1$. 
We define $f:\R^2\to\R$ by $f(t,x)=\rho\left((t-x)\frac{1}{\rho(x)}\right)$.
Then, by the transformation rule and Tonelli's theorem, we find that $f\in L^1(\R^2)$:
\begin{align*}
\int_{\R^2} fd\mu&=\int_{-\infty}^\infty\int_{-\infty}^\infty \rho\left(\frac{s}{\rho(r)}\right)\D s \D r\\
                   &=\int_{-\infty}^\infty \rho(r) \D r=1 \; .
\end{align*}

We claim, that there are no nonnegative $g,h\in L^1(\R)$ such that $f(x,y)\leq g(x)+h(y)$.

This follows by contradiction: Assume that there are such $g$ and $h$.
There is an open  neighbourhood $U$ of the diagonal in $\R^2$, such that $f\vert_U \geq 4\alpha:=\frac{1}{2}\rho(0)$.
Markov's inequality entails: 
\[ \mu(\{t\ | g(t)\geq \alpha\})\leq \frac{1}{\alpha} \|g\|_{L^1}  \quad \mbox{and}\quad
   \mu(\{x\ | h(x)\geq \alpha\})\leq \frac{1}{\alpha} \|h\|_{L^1} \; . \]
With $A:= \R\setminus \{t\ | g(t)\geq \alpha\}$ and $B=\R\setminus \{x\ | h(x)\geq \alpha\}$, 
$f\vert_{A\times B} < 2\alpha$.
As $A\cap B$ has infinite measure, there is a density point $x$ of $A\cap B$,
hence 
\[\lim_{r\to 0} \frac{\mu(A\cap B\,\cap\, ]x-r,x+r[)}{\mu(]x-r,x+r[)}=1 \]
and $\mu(A\cap B\cap\ ]x-r,x+r[)\geq \frac{1}{2} 2r=r$ for $r$ sufficiently small.

As $U$ is open, there is some $\epsilon>0$ such that $V_\delta:=]x-\delta,x+\delta[\times ]x-\delta,x+\delta[\subset U$ and
$f\vert_{V_\delta}\geq 4\alpha$ for every $\delta\in]0,\epsilon]$. On the other hand, 
$\mu((A\cap B)\times(A\cap B)\cap V_\delta)\geq \delta^2$ for $\delta$ sufficiently small. 
But $f\vert_{(A\cap B)\times(A\cap B)\cap V_\delta}<2\alpha$, a contradiction.
\end{example}

\begin{remark} In optimal transport theory, it is well known that for a measurable cost function $c(x,y)$ the existence of 
$L^1$-functions $c_X, c_Y$  with $c(x,y)\leq c_X(x)+c_Y(y)$ is a stronger assumption than 
$\int c(x,y)d\mu(x)d\mu(y)<\infty$. The stronger assumption has useful consequences (cf.~\cite{Villani2009}, p.~45).
\end{remark}


\end{document}